\newtheorem{theorem}{Theorem}[section]
\theoremstyle{plain}
\newtheorem{conjecture}[theorem]{Conjecture}
\newtheorem{corollary}[theorem]{Corollary}
\newtheorem{lemma}[theorem]{Lemma}
\numberwithin{equation}{section}
\newcommand{\Acal}{\mathscr{A}}
\newcommand{\Kcal}{\mathscr{K}}
\newcommand{\Lcal}{\mathscr{L}}
\newcommand{\Ocal}{\mathscr{O}}
\newcommand{\Pro}{\mathbb{P}}
\newcommand{\Z}{\mathbb{Z}}
\newcommand{\C}{\mathbb{C}}
\newcommand{\F}{\mathbb{F}}
\newcommand{\Q}{\mathbb{Q}}
\newcommand{\R}{\mathbb{R}}
\newcommand{\Disc}{\mathrm{Disc}}
\newcommand{\rk}{\mathrm{rank}\,}
\newcommand{\supp}{\mathrm{supp}\,}
\newcommand{\Vol}{\mathrm{Vol}\,}
\begin{document}
\title[Bounded ranks]{Bounded ranks and Diophantine error terms}
\author{Hector Pasten}
\address{ Department of Mathematics\newline
\indent Harvard University\newline
\indent Science Center\newline
\indent 1 Oxford Street\newline
\indent Cambridge, MA 02138, USA}
\email[H. Pasten]{hpasten@gmail.com}%
%\thanks{}
%\thanks{This paper is in final form and no version of it will be submitted for publication elsewhere.}
\date{\today}
\subjclass[2010]{Primary 11G05; Secondary 11J25, 11J97} %
\keywords{Rank, elliptic curve, error term, Diophantine approximation}%
%\dedicatory{}

\begin{abstract} 

We show that Lang's conjecture on error terms in Diophantine approximation implies Honda's conjecture on ranks of elliptic curves over number fields. We also show that even a very weak version of Lang's error term conjecture would be enough to deduce boundedness of ranks for quadratic twists of elliptic curves over number fields. This can be seen as evidence for boundedness of ranks not relying on probabilistic heuristics on elliptic curves.

\end{abstract}

\maketitle

%\tableofcontents

%%%%%%%%%%%%%%%%%%%%%%%%%%%%%%%%%%%%%%
%%%%%%%%%%%%%%%%%%%%%%%%%%%%%%%%%%%%%%
%%%%%%%%%%%%%%%%%%%%%%%%%%%%%%%%%%%%%%

\section{Introduction}

 In 1960, Honda proposed the following conjecture (cf. p. 98 in \cite{Honda}):
\begin{conjecture}[Honda] \label{ConjHonda} Let  $A$ be an abelian variety over a number field $k$. There is  $c=c(A,k)>0$  such that for every finite extension $L/k$ we have $\rk A(L) \le c\cdot [L:k]$.
\end{conjecture}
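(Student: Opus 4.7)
The plan is to derive Honda's bound from Lang's error term conjecture by producing a sufficiently strong lower bound for the canonical height on non-torsion points of $E(L)$ and combining this with a lattice point count.

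First I would reduce to the case of an elliptic curve $E/k$. By Poincar\'e reducibility and the fact that rank behaves additively under isogeny, Honda's bound for $A$ follows from the corresponding bound for each simple factor, so the heart of the matter is the one-dimensional case emphasized in the abstract (with an analogous, heavier argument for higher-dimensional simple factors). Fix $E/k$ and a finite extension $L/k$ of degree $d$. Write $r = \rk E(L)$ and let $\lambda(L) > 0$ be the infimum of the canonical height $\hat h$ on the non-torsion points of $E(L)$.

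Since $(E(L)/E(L)_{\mathrm{tors}}, \hat h)$ is a Euclidean lattice of rank $r$, elementary sphere-packing gives
$$\#\{P \in E(L) : \hat h(P) \le T\} \gg (T/\lambda(L))^{r/2}.$$
On the other hand, the comparison $\hat h = h + O_E(1)$ together with a Northcott-style counting bound on $E(L)$ produces an upper bound of the form $\ll (dT)^{C(E)}$ for the same quantity. Taking $T$ of a well-chosen size, these two inequalities combine to give $r = O(d)$ \emph{provided} that $\lambda(L)$ is bounded below polynomially in $d$.

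The key step is to produce such a bound from Lang's error term conjecture. The natural bridge is the formal group logarithm at a place $v$ of good reduction: a non-torsion point $P \in E(L)$ with very small $\hat h(P)$ reduces (after passing to a bounded-index subgroup) to a point in the formal group at $v$, and its formal logarithm gives an exceptionally sharp rational approximation over $L$ to an algebraic quantity attached to $E$. Lang's conjectural error term of the form $(\log H)^{1+\varepsilon}$ in Diophantine approximation forbids such approximations from being too good, translating into a lower bound $\lambda(L) \gg d^{-O(1)}$. Plugging this into the counting dichotomy yields $r \ll d$, which is Honda's inequality.

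The main obstacle is calibrating this translation. Lang's error term must be invoked in a form uniform over all finite extensions $L/k$, and the resulting lower bound on $\lambda(L)$ must decay no faster than polynomially in $d$, so that the counting inequality actually yields a \emph{linear} (rather than merely polynomial) dependence of $r$ on $d$. This is exactly the regime where Roth-type errors of the shape $q^{\varepsilon}$ are insufficient, while the conjectured $(\log H)^{1+\varepsilon}$ is on the borderline of being strong enough; extracting the uniformity in $L$ from the archimedean and $v$-adic forms of Lang's conjecture is the delicate part of the argument.
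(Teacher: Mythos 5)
Your proposal and the paper's proof (of Theorem~\ref{ThmMain1}, the relevant implication) share only the general orientation—``Lang's error term conjecture $\Rightarrow$ Honda for $E$''—but the actual arguments are quite different, and yours has two inequalities pointing the wrong way.

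\textbf{The counting step is broken.} You claim ``elementary sphere-packing gives $\#\{P : \hat h(P)\le T\}\gg (T/\lambda(L))^{r/2}$.'' This is false: a lower bound on the first minimum $\lambda(L)$ gives a \emph{lower} bound on the covolume of the Mordell--Weil lattice (Minkowski), hence an \emph{upper} bound of roughly $(T/\lambda(L))^{r/2}$ for the number of lattice points in a ball of radius $\sqrt{T}$, not a lower bound. (A lattice such as $\Z\times N\Z$ with $N$ huge has $\lambda=1$ but very few short vectors.) Compounding this, the Northcott-type count is exponential, not polynomial, in the (logarithmic) height $T$—the number of points of $E(L)$ with $\hat h\le T$ and $[L:\Q]\le d$ grows like $e^{c(d)T}$. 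So the pair of inequalities you want to intersect are both misstated, and the resulting dichotomy does not produce a bound $r=O(d)$; you yourself flag that the calibration is uncertain, and indeed it does not go through.

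\textbf{The paper runs the geometry of numbers in the opposite direction.} Rather than seeking a \emph{lower} bound on $\hat h$, the paper assumes the rank $\rho=\rk E(L)$ is large and uses Minkowski's theorem (Lemma~\ref{LemmaApprox}) to produce, for each $Y$, a nonzero $b\in\Z^\rho$ with $\|b\|_2\ll Y^{2/(\rho-2)}$ and $|F(b)|\le 1/Y$, where $F(t)=\sum_j \alpha_j t_j$ is the $\R$-linear map built from complex-analytic lifts $\alpha_j$ of a basis of $E(L)$ under an archimedean uniformization. The point $z_b=\sum b_j x_j$ is then simultaneously archimedean-close to the origin (so the proximity function $m_S(x_0,z_b)$ is $\gtrsim \frac1r\log Y$) and of small canonical height ($\hat h(z_b)\ll \|b\|_2^2\ll Y^{4/(\rho-2)}$). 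Feeding this into Lang's inequality $m_S(x_0,x)\le c_L+C\log^+\hat h(x)$ forces $\rho\le 4Cr+3$, i.e.\ a \emph{linear} bound in $r=[L:k]$. There is no formal group, no $v$-adic place of good reduction, and no sphere-packing count; the large rank is converted directly into a too-good archimedean approximation to the divisor $x_0$, which is exactly what Lang's error term conjecture forbids. Finally, the paper's Theorem~\ref{ThmMain1} is stated and proved only for elliptic curves; the Poincar\'e reducibility step you sketch is not part of, or needed for, the paper's claim.
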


 A direct consequence of Honda's conjecture is:
\begin{conjecture}[Uniform boundedness for ranks of quadratic twists] \label{ConjTwist} Let $E$ be an elliptic curve over a number field $k$. There is a positive integer $R=R(E,k)$ satisfying that for every elliptic curve $E'$ over $k$ which is a quadratic twist of $E$, we have $\rk E'(k)\le R$.
\end{conjecture}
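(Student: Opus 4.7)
The plan is to exploit the fact that any quadratic twist of $E$ becomes isomorphic to $E$ after base change to a quadratic extension of $k$, and then apply Honda's conjecture (Conjecture \ref{ConjHonda}) in that fixed degree to obtain a bound independent of the twist. Concretely, I would parametrize quadratic twists by classes $d \in k^\times/(k^\times)^2$: if $E'$ denotes the twist corresponding to a nontrivial class $d$, then setting $L = k(\sqrt{d})$ produces an isomorphism $\phi\colon E_L \xrightarrow{\sim} E'_L$ defined over $L$. (The trivial twist $E' = E$ is already covered directly by $\rk E(k) \le c$.)

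The key intermediate step is the classical rank decomposition for quadratic extensions. Letting $\sigma$ generate $\mathrm{Gal}(L/k)$, its action on $E(L) \otimes \Q$ splits this space into $\pm 1$ eigenspaces; the $+1$ eigenspace is $E(k) \otimes \Q$, while the $-1$ eigenspace is carried by $\phi$ onto $E'(k) \otimes \Q$. Taking dimensions yields the identity
\[
\rk E(L) \;=\; \rk E(k) + \rk E'(k).
\]

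Finally, Honda's conjecture applied to $E/k$ produces a constant $c = c(E,k) > 0$ such that $\rk E(M) \le c\cdot [M:k]$ for \emph{every} finite extension $M/k$. Specializing to $M = L$, which has degree $2$ over $k$, gives $\rk E(L) \le 2c$, and combining with the rank decomposition yields $\rk E'(k) \le 2c$. This bound depends only on $(E,k)$, so one may take $R(E,k) = 2c$.

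I would not expect any real obstacle: the argument is a short application of Honda's conjecture to the single degree $[L:k] = 2$, together with the well-known Galois-theoretic decomposition of points on $E$ over a quadratic extension in terms of $E(k)$ and $E'(k)$. The only conceptual point worth emphasizing is that Honda's constant $c$ depends on $(E,k)$ but not on $L$, which is precisely what transforms his conjecture into a uniform statement over all quadratic twists.
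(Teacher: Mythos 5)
Your proposal is correct, and it is exactly the ``direct consequence'' the paper alludes to but does not spell out: the paper merely asserts that Conjecture~\ref{ConjHonda} implies Conjecture~\ref{ConjTwist}, citing Serre, without writing the argument. Your derivation is the standard one: for the twist $E'$ by $d$ and $L=k(\sqrt{d})$, the Galois involution splits $E(L)\otimes\Q$ into $\pm1$ eigenspaces identified with $E(k)\otimes\Q$ and $E'(k)\otimes\Q$, giving $\rk E(L)=\rk E(k)+\rk E'(k)$, and Honda's bound $\rk E(L)\le 2c$ then yields $\rk E'(k)\le 2c$ uniformly in $d$.

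One small point worth noting for context: Conjecture~\ref{ConjTwist} is, of course, still a conjecture; what you have proved is the implication from Conjecture~\ref{ConjHonda}, which is all the paper claims at that point. The paper's later (conditional) route to Conjecture~\ref{ConjTwist} is via Corollary~\ref{CoroTwist}, which deduces it from the weak error-term Conjecture~\ref{ConjWeak} with $r=2$ through Theorem~\ref{ThmMain2}; the final step there silently uses the same inequality $\rk E'(k)\le\rk E(L)$ that your eigenspace decomposition makes explicit.
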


The fact that Conjecture \ref{ConjHonda} implies Conjecture \ref{ConjTwist}  has been considered in the past as evidence \emph{against}  Honda's conjecture, see for instance the discussion in p.162 of \cite{Serre}. 

 Uniform boundedness of ranks of elliptic curves over number fields is a somewhat controversial topic. Elliptic curves of fairly large rank have been constructed over $\Q$ \cite{Elkies} and it is known that the ranks of elliptic curves over $\F_p(t)$ are unbounded  \cite{ShTa, Ulmer}.  On the other hand, different probabilistic heuristics have been put forward in the direction of uniform boundedness \cite{ BKLPR, PPVW, PoonenICM, PR, RuSi, WDEFGR}. See Section 3 of \cite{PPVW} for a historical account.  Nowadays, it seems unclear whether Conjecture \ref{ConjTwist} should be expected to be false or not. 

Lang has conjectured improved error terms for Roth's theorem in Diophantine approximation and more generally, in the context of Vojta's conjectures. We will show that Lang's error term conjecture in a very particular setting implies Honda's conjecture for elliptic curves. This is done in Section \ref{SecMain} after recalling Lang's conjecture in Section \ref{SecLang}. 

More precisely: Theorem \ref{ThmMain1} shows that Lang's error term conjecture for curves (Conjecture \ref{ConjLang}) implies Honda's conjecture for elliptic curves. Theorem \ref{ThmMain2} shows that a very weak version of Lang's error term conjecture (namely, Conjecture \ref{ConjWeak}) is still enough to deduce strong rank bounds for elliptic curves. In particular, Corollary \ref{CoroTwist} shows that even this weak version of Lang's error term conjecture implies boundedness of ranks for quadratic twists.

Finally, in Section \ref{SecFF}, we discuss the function field cases of characteristic $0$ and $p>0$. In the former case, we prove the analogue of Lang's error term conjecture for isotrivial elliptic curves.

 Our results can be regarded as evidence towards uniform boundedness of ranks that does not come form probabilistic heuristics on elliptic curves ---as we will recall in the next section, our main evidence comes from analogies with complex analysis. See also Theorem \ref{ThmFF} below.

%%%%%%%%%%%%%%%%%%%%%%%%%%%%%%%%%%%%%%
%%%%%%%%%%%%%%%%%%%%%%%%%%%%%%%%%%%%%%
%%%%%%%%%%%%%%%%%%%%%%%%%%%%%%%%%%%%%%
%%%%%%%%%%%%%%%%%%%%%%%%%%%%%%%%%%%%%%
%%%%%%%%%%%%%%%%%%%%%%%%%%%%%%%%%%%%%%
%%%%%%%%%%%%%%%%%%%%%%%%%%%%%%%%%%%%%%

\section{Lang's conjecture on error terms} \label{SecLang}

 Let $k$ be a number field, $k^{alg}$ a fixed algebraic closure of $k$, and  $X$ a smooth projective variety over $k$. For each line sheaf $\Lcal$ on $X$ it is standard to associate a (logarithmic, normalized over the ground field $k$) height function $h_\Lcal: X(k^{alg})\to \R$ uniquely defined up to a bounded function. When $X=\Pro^n_k$ for some $n$ and $\Lcal=\Ocal(1)$, we simply write $h=h_{\Lcal}$. Given an effective divisor $D$ on $X$ and $S$ a finite set of places of $k$, we also have a proximity function  $m_S(D,-):(X\smallsetminus \supp D)(k^{alg})\to \R$ uniquely defined up to a bounded function. See for instance  \cite{HeightsBG}, \cite{LangNTIII}, or \cite{VojtaCIME} for details.

 Vojta \cite{VojtaThesis, VojtaCIME} formulated deep conjectures in the context of Diophantine approximation and value distribution. We recall here his main conjecture for algebraic points. The canonical sheaf of $X$ is written $\Kcal_X$ and the logarithmic discriminant of $x\in X(k^{alg})$ is defined by $d(x)=[k_x:k]^{-1}\log \Disc(k_x)$, where $k_x\subseteq k^{alg}$ is the residue field of $x$.

\begin{conjecture}[Vojta] \label{ConjVojta} Let $k$ be a number field, $X$ a smooth projective variety over $k$, $S$ a finite set of places of $k$, $D$ an effective normal crossings divisor on $X$, $\Acal$ an ample line sheaf on $X$, $r\ge 1$ an integer, and $\epsilon>0$. There is a proper Zariski closed set $Z\subseteq X$ depending on this data, such that for all but finitely many $x\in (X\smallsetminus Z)(k^{alg})$ with $[k_x:k]\le r$ we have
\begin{equation}\label{EqVojta}
m_S(D,x) + h_{\Kcal_X}(x)< d(x)+ \epsilon h_{\Acal}(x).
\end{equation}
\end{conjecture}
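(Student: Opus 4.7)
The plan is to follow the broad strategy by which Vojta's conjecture has been established in a few restricted settings, while being explicit about where the method stalls. This is one of the central open problems in Diophantine geometry and no complete proof is known, so the aim of this sketch is to outline the natural route of attack and locate the obstruction.

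First I would reduce the statement about algebraic points of bounded degree to one about rational points on an auxiliary variety. The standard device is to pass to a ramified cover $Y\to X$ branched along $D$ (or to a symmetric-product construction suited to the bound $[k_x:k]\le r$) chosen so that Riemann--Hurwitz transfers the discriminant contribution $d(x)$ and part of $m_S(D,x)$ into a canonical-height contribution on $Y$. After this reduction, it is enough to prove the rational-point version of Vojta's inequality on $Y$ with a modified canonical and boundary datum.

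Next I would implement the Vojta--Faltings product method. On a high power $Y^m$, with $m$ large in terms of $\epsilon$ and the geometry, I would construct a global section of a carefully twisted line sheaf vanishing to prescribed high order along the graphs of an $m$-tuple of hypothetical counterexamples, of controlled Arakelov height relative to an ample reference $\Acal$. An index estimate combined with Faltings's product theorem (or, in abelian and semi-abelian settings, Schmidt's subspace theorem) would then force such $m$-tuples into a proper product subvariety; descending the induction in $m$ produces the desired proper Zariski closed exceptional set $Z\subseteq X$.

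The hard part, and the reason the conjecture remains open, is the first step of the product method: producing the auxiliary section with the required vanishing order and with small height on a general smooth projective variety equipped with an arbitrary normal crossings divisor $D$. Faltings's work supplies the input for subvarieties of abelian varieties, Vojta's proof of Roth's theorem and the Faltings--Bombieri proof of Mordell handle the curve case, and in the function field setting jet-differential techniques are available; but the Riemann--Roch and vanishing estimates for twists of $\Kcal_X(D)$ on $Y^m$ needed in full generality are simply not known. This is precisely the bottleneck that keeps Conjecture \ref{ConjVojta} beyond current reach, which is why the present paper can only assume it, not prove it.
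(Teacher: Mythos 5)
You correctly recognized that this is a conjecture, not a theorem: the paper states Vojta's conjecture without proof (it is open in general, and the paper only cites Vojta's partial results in the curve case \cite{VojtaGen}), so there is no proof in the paper to compare against. Your sketch of the would-be strategy is accurate in outline: the passage from bounded-degree algebraic points to rational points on a cover so that Riemann--Hurwitz absorbs the discriminant term is exactly the device Vojta uses in \cite{VojtaGen} and \cite{VojtaMoreGen}, and the Vojta--Faltings product method with Faltings's product theorem is indeed the engine behind the known cases (curves, subvarieties of abelian and semi-abelian varieties). You also correctly locate the obstruction at the construction of small-height auxiliary sections with prescribed index on $Y^m$ for a general smooth projective $X$ with arbitrary normal crossings $D$. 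One small refinement worth noting: even in the cases where the inequality is proved, the error term obtained is $\epsilon h_{\Acal}(x)$ and \emph{not} the $C\log^+ h_{\Acal}(x)$ of Lang's refinement (Conjecture \ref{ConjLang}), and in dimension $\ge 2$ the Lang-type error term can actually fail \cite{LMW}; so even a full proof of Conjecture \ref{ConjVojta} by the route you describe would not by itself yield what the paper's main theorems need. Your answer is appropriate: this statement is assumed, not proved.
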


 Vojta \cite{VojtaGen} proved large part of this  when $X$ is a curve, which is the main case  for us. Conjecture \ref{ConjVojta} generalizes several conjectures and theorems in Diophantine approximation. For instance, Roth's theorem can be stated as follows: \emph{Let $S$ be a finite set of places of $k$, $D$ an effective reduced divisor on $\Pro^1_k$ and $\epsilon>0$. For all but finitely many $x\in \Pro^1_k(k)$ we have}
\begin{equation}\label{EqRoth}
m_S(D,x)< (2+\epsilon)h(x).
\end{equation}
Note that this is precisely Conjecture \ref{ConjVojta} for $X=\Pro^1_k$ and $r=1$, i.e. $k$-rational points. 

Lang conjectured \cite{LangAsympt, LangReport, LangTrotter} that \eqref{EqRoth} might be replaced by the following estimate with a better error term ($\log^+t$ means $\log\max\{1,t\}$):
$$
m_S(D,x)< 2h(x) + (1+\epsilon)\log^+h(x).
$$
This was inspired by inspired by Khintchine's theorem, which asserts that an analogous estimate holds for rational approximations of almost all real numbers (in a measure theoretic sense). See also \cite{LangDraft}, p.214 in \cite{LangNTIII}, and paragraph (14.2.8) in \cite{HeightsBG}. 

Motivated by analogies between number theory and value distribution, Lang \cite{LangDuke88} made a similar conjecture for the error term in the Second Main Theorem of Nevanlinna theory in complex analysis. This was later proved by Wong \cite{Wong}, giving valuable evidence for Lang's error term conjecture. See \cite{CherryYe} for a detailed study of  error terms in Nevanlinna theory.

Lang went further and conjectured a similar improvement for the error term in Conjecture \ref{ConjVojta}, namely, that \eqref{EqVojta} should be replaced by
$$
m_S(D,x) + h_{\Kcal_X}(x)< d(x)+ C\log^+ h_{\Acal}(x)
$$
for any given constant $C>1$, cf. p.222 of \cite{LangNTIII}. This improved error term is incorporated in Vojta's formulation of his conjecture in p.198 of \cite{VojtaCIME}, possibly with a larger value of $C$ and restricting to a generic set of algebraic points, see \emph{loc. cit.} for details. 

However, it turns out that when $\dim X\ge 2$  the error term conjecture  \emph{can fail}:  There might be no value of $C$ for which the term $\epsilon h_\Acal(x)$ in \eqref{EqVojta} can be replaced by $C\log^+ h_\Acal(x)$. This is shown by an explicit construction in \cite{LMW}, for rational points in the surface obtained as the blow-up of $\Pro^2_k$ at a point. A key difference between $\dim X\ge 2$ and $\dim X=1$ in Conjecture \ref{ConjVojta} is the exceptional set $Z$: In higher dimensions it  can depend on the  $\epsilon$ appearing in \eqref{EqVojta} (cf. \cite{LMW}), while for curves we may simply take $Z=\supp D$. The structure of $Z$ when $\dim X\ge 2$  is also relevant in the analysis of the parameter $r$ in Conjecture \ref{ConjVojta}, cf. \cite{Levin}.

The problem remains open in the case when $X$ is a curve. Here we state a version weaker than Lang's formulation in \cite{LangNTIII}, as we allow the number $C$ to depend on the geometric data. 
\begin{conjecture}[Error term conjecture for curves] \label{ConjLang} Let $k$ be a number field, $X$ a smooth projective curve over $k$, $D$ an effective reduced divisor on $X$, and $\Acal$ an ample line sheaf on $X$. There is a number $C>0$  depending on the previous data and satisfying the following: 

Let $r$ be a positive integer and let $S$ be a finite set of places of $k$. For all but finitely many algebraic points $x\in X(k^{alg})$ with $[k_x:k]\le r$ we have
\begin{equation}\label{EqLangConj}
m_S(D,x) + h_{\Kcal_X}(x)< d(x)+ C\log^+ h_{\Acal}(x).
\end{equation}
\end{conjecture}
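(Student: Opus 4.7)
The plan is to proceed in two stages: first reduce to the case $r=1$ of $k$-rational points, and then attack the rational-point case on a curve directly. The reduction step passes to the $r$-th symmetric power $X^{(r)}$, on which algebraic points of degree $\le r$ on $X$ correspond essentially to $k$-rational points. This reduction is delicate because $h_{\Kcal_X}$ and $d$ must be compared with their analogs on $X^{(r)}$ (the discriminant term being controlled by the ramification divisor of $X^r \to X^{(r)}$), and one must handle the fact that $X^{(r)}$ is a higher-dimensional variety where the structure of the exceptional set $Z$ is known to be problematic --- which is precisely why the statement is restricted to curves in the first place.

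For the $r=1$ case, the plan is to rerun Vojta's proof of Conjecture \ref{ConjVojta} for curves (via the Thue--Siegel--Vojta method with auxiliary sections on $X\times X$, using Mumford's gap principle together with a Dyson-type non-vanishing lemma), but tracking the free parameters much more carefully. The $\epsilon\, h_{\Acal}(x)$ error in \eqref{EqVojta} arises because an integer parameter $M$ --- essentially the degree of the auxiliary polynomial in each factor --- is chosen to balance two inequalities whose slack forces a linear-in-$h$ term. The idea would be to let $M$ grow as a function of the height of the putative counterexample and to try to extract only a $C\log^+ h_{\Acal}(x)$ slack from the resulting estimate. A complementary route is to translate Wong's proof of the Nevanlinna analog: Wong combines the logarithmic derivative lemma with a sharp form of Jensen's formula, and one would look for an Arakelov-theoretic counterpart, perhaps via a well-chosen "arithmetic derivative" framework and a sharp Chern-class computation on an arithmetic surface.

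The main obstacle is that, already in the smallest nontrivial case $X=\Pro^1_k$ and $r=1$, Conjecture \ref{ConjLang} is equivalent to Lang's original error-term refinement of Roth's theorem, which has resisted attack for over half a century. Every known incarnation of the Thue--Siegel--Vojta method is inherently lossy: the non-vanishing estimates at its core (Roth's lemma, Dyson's lemma, Faltings' product theorem) sacrifice a power of the height, so that optimizing the free parameters still leaves a polynomial-in-$h$ error rather than a logarithmic one. On the Nevanlinna side, Wong's proof exploits the $C^\infty$ smoothness of a meromorphic function in an essential way through the logarithmic derivative lemma, and no workable arithmetic substitute is known for algebraic points on curves of positive genus. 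So while the plan above is the natural one, any honest execution would amount to a major breakthrough in Diophantine approximation; a realistic near-term goal is to settle special cases, for instance function-field analogs or isotrivial families, rather than Conjecture \ref{ConjLang} in full.
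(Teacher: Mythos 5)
This statement is Conjecture~\ref{ConjLang}; it is an open conjecture, and the paper does not offer a proof of it. It is stated as a hypothesis (a restricted form of Lang's error-term refinement of Vojta's conjecture for curves) and then used as an assumption in Theorem~\ref{ThmMain1} to deduce Honda's conjecture for the elliptic curve $E$. So there is no ``paper's own proof'' to compare your attempt against.

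Your write-up correctly recognizes this: you do not claim a proof, you sketch the two natural lines of attack (the Thue--Siegel--Vojta machinery with carefully tracked parameters, and an Arakelov-theoretic analogue of Wong's Nevanlinna argument) and then honestly explain why each is expected to fail short of a major breakthrough --- the intrinsic height loss in Roth/Dyson/Faltings-type non-vanishing lemmas, and the absence of an arithmetic substitute for the logarithmic derivative lemma. That assessment is accurate and consistent with the paper's framing. Two small remarks on the details you give. First, the reduction from degree-$\le r$ points to rational points via $X^{(r)}$ is even more problematic than you indicate: the counterexamples of Levin--McKinnon--Winkelmann in \cite{LMW} show the logarithmic error term can genuinely \emph{fail} in dimension $\ge 2$, and the exceptional set's $\epsilon$-dependence there is exactly the obstruction, so one should not expect a symmetric-power reduction to preserve the conjectured error term. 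Second, even the base case $X=\Pro^1$, $r=1$, $D$ a sum of two or three points (Lang's original conjecture refining Roth) is wide open, so there is no known toehold from which to bootstrap. If you want to make progress on something adjacent, the paper's Theorem~\ref{ThmFF} (the isotrivial function-field case over $\C$, where Riemann--Hurwitz replaces the unavailable arithmetic tools) is the kind of tractable analogue you allude to at the end.
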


Let us formulate a much weaker conjecture where the number $C$ is allowed to depend on \emph{all} parameters, and the logarithmic discriminant $d(x)$ is replaced by \emph{some} function of the residue field $k_x$. We define the set of fields $\Omega(k,r)=\{L : k\subseteq L\subseteq k^{alg}\mbox{ and }[L:k]\le r\}$.

\begin{conjecture}[Weak error term conjecture] \label{ConjWeak} Let $k$ be a number field, $X$ a smooth projective curve over $k$, $D$ an effective reduced divisor on $X$,  $\Acal$ an ample line sheaf on $X$, $r$ a positive integer, and $S$ a finite set of places of $k$. There is a number $C>0$ and a function $\xi: \Omega(k,r)\to \R$, both possibly depending on all the previous data, such that for all but finitely many algebraic points $x\in X(k^{alg})$ with $[k_x:k]\le r$ we have
$$
m_S(D,x) + h_{\Kcal_X}(x)< \xi(k_x)+ C\log^+ h_{\Acal}(x).
$$
\end{conjecture}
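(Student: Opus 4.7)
This statement is a weakening of Conjecture~\ref{ConjLang}, but the weakening concerns only the additive term (replacing $d(x)$ by an arbitrary function $\xi$ of the residue field) and the uniformity of the constant $C$ (now allowed to depend on $r$ and $S$); the logarithmic shape of the error term $C\log^+ h_\Acal(x)$ is not relaxed. Since Vojta's theorem for curves~\cite{VojtaGen} already furnishes the analogous inequality with $\epsilon h_\Acal(x)$ in place of $C\log^+ h_\Acal(x)$, the real content lies in this asymptotic improvement. To my knowledge the conjecture is open, so the discussion below is a proof strategy rather than a complete proof.

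The plan is a two-step reduction followed by an attempt to import the error-term refinement from Nevanlinna theory. First, by Northcott's theorem the set $\{x\in X(k^{alg}) : [k_x:k]\le r,\ h_\Acal(x)\le H\}$ is finite for each $H$, so the freedom in $\xi$ lets one absorb the contribution of any fixed bounded-height set, and of any finite exceptional set, into $\xi$. It therefore suffices to prove the inequality for $h_\Acal(x)$ arbitrarily large. Next, stratify the remaining points by their residue field $L\in \Omega(k,r)$, and for each such $L$ seek an estimate of the form
$$
m_S(D,x)+h_{\Kcal_X}(x) \le c(L)+C\log^+ h_\Acal(x)
$$
valid for all but finitely many $x\in (X\smallsetminus \supp D)(L)$; defining $\xi(L)=c(L)$ plus the height contribution of the finite exceptional set then yields the conjecture.

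For the per-field estimate I would attempt to adapt Wong's proof~\cite{Wong} of Lang's error-term conjecture in Nevanlinna theory, where the gain from $\epsilon T(r,f)$ to $\log T(r,f)$ comes from a sharpened logarithmic derivative lemma applied to Wronskians of families of sections. The arithmetic analogue would replace the Bombieri--Vojta auxiliary polynomial, whose height control bleeds an $\epsilon h_\Acal(x)$ term through Roth's lemma, by an arithmetic Wronskian constructed from a basis of a suitable linear system on $X$; combined with effective control of the discriminants of points in $X(L)$, which can be absorbed into $c(L)$ because we work one field at a time, this would in principle isolate the logarithmic behaviour.

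The principal obstacle, and the reason the conjecture remains open even in this weak form, is precisely the construction of arithmetic Wronskians with sharp height estimates. Every known variant of the Thue--Siegel--Roth--Vojta method loses a multiplicative $(1+\epsilon)$ factor somewhere, most commonly in the zero estimate used to apply Roth's lemma, and the weakenings incorporated into Conjecture~\ref{ConjWeak} (arbitrary $\xi(k_x)$, worse constants) do nothing to address this loss since it is intrinsic to the $h_\Acal(x)$ term. Turning that loss into a merely logarithmic one would require a genuinely new arithmetic input whose analytic counterpart is the sharpened logarithmic derivative lemma; without such an input, the strategy above recovers Vojta's theorem but not the conjectured logarithmic error term.
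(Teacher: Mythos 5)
This statement is labeled a \emph{conjecture} in the paper (Conjecture~\ref{ConjWeak}), and the paper does not prove it; it is introduced precisely as an open hypothesis whose consequences (Theorem~\ref{ThmMain2}, Corollary~\ref{CoroTwist}) are then derived. There is therefore no proof in the paper to compare against. You correctly recognize that the statement is open, correctly locate the genuine difficulty (passing from the $\epsilon h_\Acal(x)$ error term in Vojta's theorem to a logarithmic error term, which is not eased by allowing $\xi$ and $C$ to depend on all the data), and correctly note that the Nevanlinna-theoretic analogue was proved by Wong but that no arithmetic counterpart of the sharpened logarithmic derivative lemma is currently available. Your preliminary reductions (absorbing bounded-height and finite exceptional sets into $\xi$ via Northcott, then stratifying by residue field) are sound but do not touch the core obstruction, as you yourself observe. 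In short: your assessment matches the paper's treatment of this statement --- it is presented as a conjecture, not proved, and your discussion of why it resists proof is accurate.
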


There is another version of Vojta's conjecture involving \emph{truncated counting functions} (a generalization of the radical of an integer), see \cite{VojtaMoreGen}. The refined conjecture includes the $abc$ conjecture as a special case. Using ramified covers, it is shown in \cite{VojtaMoreGen} that Conjecture \ref{ConjVojta} is equivalent to the seemingly stronger version with truncated counting functions. The equivalence, however, does not respect error terms. For instance, for the $abc$ conjecture (formulated in logarithmic form) it is known that the error term cannot be $O(\log^+ h_\Acal(x))$, and in fact, it is at least $\gg_\epsilon (h_\Acal(x))^{1/2-\epsilon}$ for infinitely many examples, cf. \cite{SteTij}. This  difference in the expected  form of error terms in the  truncated and non-truncated versions of Vojta's conjecture was known to Lang (at least in the setting of Roth's theorem compared to the $abc$ conjecture) and has received some attention, see for instance \cite{LangDraft} and \cite{Franken}.

%%%%%%%%%%%%%%%%%%%%%%%%%%%%%%%%%%%%%%
%%%%%%%%%%%%%%%%%%%%%%%%%%%%%%%%%%%%%%
%%%%%%%%%%%%%%%%%%%%%%%%%%%%%%%%%%%%%%
%%%%%%%%%%%%%%%%%%%%%%%%%%%%%%%%%%%%%%
%%%%%%%%%%%%%%%%%%%%%%%%%%%%%%%%%%%%%%
%%%%%%%%%%%%%%%%%%%%%%%%%%%%%%%%%%%%%%

\section{Error terms and ranks} \label{SecMain}

Let us write $\|-\|_2$  for the Euclidean norm  in $\R^m$.
\begin{lemma} \label{LemmaApprox} Let $n\ge 3$ be an integer,  $F:\R^n\to \C$ an $\R$-linear map, and  $H=\ker F$. Assume $H\cap \Z^n=\{0\}$. For certain $c>0$ depending on this data, the following holds:

Let $\delta\in (0,1]$. There is $b\in \Z^n\smallsetminus\{0\}$ satisfying  $\|b\|_2\le c\cdot \delta^{-2/(n-2)}$ and $0<|F(b)|\le\delta$.
\end{lemma}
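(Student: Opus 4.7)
The plan is to prove Lemma \ref{LemmaApprox} by a Dirichlet-style pigeonhole argument applied to integer points in a cube of side $N$, using that the target $\C$ is only two-dimensional while the source is $n$-dimensional.

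First I would note that $F$ is not identically zero: otherwise $H=\R^n$ would contain every nonzero integer vector, contradicting the hypothesis $H\cap \Z^n=\{0\}$. Let $M$ denote the operator norm of $F$ with respect to $\|-\|_2$ on $\R^n$ and the standard absolute value on $\C$. Given $\delta\in(0,1]$, I would set $N=\lceil c_0\,\delta^{-2/(n-2)}\rceil$ for a constant $c_0=c_0(F,n)>0$ chosen in a moment.

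Consider the set $V=\{0,1,\dots,N\}^n\subset\Z^n$ of cardinality $(N+1)^n$. For each $v\in V$ one has $\|v\|_2\le N\sqrt{n}$, so $F(v)$ lies in the closed disk $\Delta\subset\C$ of radius $MN\sqrt{n}$. Tile $\Delta$ by axis-aligned half-open squares of side length $\delta/\sqrt{2}$ (each of diameter at most $\delta$); a simple count shows that at most $\left(4MN\sqrt{n}/\delta+1\right)^2$ such squares are needed. For $c_0$ large enough in terms of $M$ and $n$ the inequality
$$
(N+1)^n \;>\; \left(4MN\sqrt{n}/\delta+1\right)^2
$$
holds, because after expansion this reduces to $N^{n-2}\gg_n M^2/\delta^2$, which uses the hypothesis $n\ge 3$ (so that $n-2\ge 1$) and the choice of $N$. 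The pigeonhole principle then produces two distinct vectors $v,w\in V$ with $F(v)$ and $F(w)$ lying in the same square, and setting $b=v-w\in\Z^n\setminus\{0\}$ gives $\|b\|_2\le N\sqrt{n}\le c\,\delta^{-2/(n-2)}$ and $|F(b)|\le\delta$.

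Finally, $F(b)\ne 0$ is forced by the hypothesis: since $b\in\Z^n\setminus\{0\}$ and $H\cap\Z^n=\{0\}$, one has $b\notin H$, hence $F(b)\ne 0$. I do not foresee any serious obstacle; the only bookkeeping is tracking the constants so that the exponent $2/(n-2)$ comes out correctly, which is where the dimensional gap $n-2$ between the source and target (visible through Dirichlet's principle) is exactly exploited. The fact that the constant $c$ is allowed to depend on $F$ (through $M$) is essential, as is the requirement $n\ge 3$.
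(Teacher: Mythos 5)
Your proof is correct, and it takes a genuinely different route from the paper's. The paper applies Minkowski's convex body theorem to an explicit symmetric convex ``slab'' around the hyperplane $H=\ker F$ (namely $X(\lambda,\epsilon)=\{x:\ P_H(x)\in B_n(\lambda),\ \|x-P_H(x)\|_2\le\epsilon\}$), computes its volume with a case split on whether $\dim_\R(\mathrm{image}\,F)$ is $1$ or $2$, and tunes $\lambda$ so the volume exceeds $2^n$. You instead run a Dirichlet pigeonhole argument on the integer cube $\{0,\dots,N\}^n$, tiling the image disk by small squares and subtracting two lattice points landing in the same tile. Both produce the same exponent $2/(n-2)$, which in each case comes from the dimension gap between source ($n$) and target ($\le 2$). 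Your version is more elementary (no Minkowski needed) and avoids the case split on $\dim_\R(\mathrm{image}\,F)$, since covering the (possibly degenerate) image region by $2$-dimensional tiles is always a valid over-count; the small price is a bit more bookkeeping to turn $(N+1)^n>\bigl(4MN\sqrt{n}/\delta+1\bigr)^2$ into an explicit admissible $c_0$, but as you note this is routine once $N\ge1$ and $\delta\le1$ are used. The nondegeneracy $F(b)\neq 0$ is handled identically in both proofs via $H\cap\Z^n=\{0\}$.
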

\begin{proof} For $t >0$ and $m$ positive integer, let $B_m(t)=\{x\in\R^m : \|x\|_2\le t\}$ and let $\omega_m=\Vol B_m(1)$. Since $[-m^{-1/2},m^{-1/2}]^m\subseteq B_m(1)$, we have $\omega_m\ge (m/4)^{-m/2}$. 

Let $P_H: \R^n\to H$ be the orthogonal projection. For $\lambda\ge 1$ and $\epsilon\in (0,1]$, consider the set
$$
X(\lambda, \epsilon):=\{x\in\R^n : P_H(x)\in B_n(\lambda)\mbox{ and }\|x -  P_H(x)\|_2\le \epsilon\}\subseteq \R^n.
$$
 Then $X(\lambda,\epsilon)$ is compact, convex, and symmetric about $0$, and 
 $$
\Vol X(\lambda,\epsilon)=\begin{cases} 
2\epsilon\cdot \lambda^{n-1}\omega_{n-1}&\mbox{ if }\dim_\R(\mathrm{image}(F))=1\\
\pi\epsilon^2\cdot \lambda^{n-2}\omega_{n-2}&\mbox{ if }\dim_\R(\mathrm{image}(F))=2.
\end{cases}
 $$
Hence $\Vol X(\lambda,\epsilon)\ge (n/4)^{-n/2} \epsilon^2\lambda^{n-2}$, as $n\ge 3$. Consider $\lambda_\epsilon :=n^{n/(2n-4)}\epsilon^{-2/(n-2)}\ge 1$ and note that $\Vol X(\lambda_\epsilon,\epsilon)\ge 2^n$. By Minkowski's theorem  there is a non-zero $b_\epsilon\in X(\lambda_\epsilon,\epsilon)\cap \Z^n$  and we observe  that $\|b_\epsilon \|_2\le \lambda_\epsilon+1\le 2 n^{n/(2n-4)} \epsilon^{-2/(n-2)}$.

We have $F(b_\epsilon)\ne 0$ since $H\cap\Z^n=\{0\}$. Let $a_F>0$ be  the operator norm of $F$, then  
$$
0<|F(b_\epsilon)|=|F(b_\epsilon-P_H(b_\epsilon))|\le a_F\cdot \|b_\epsilon -P_H(b_\epsilon)\|_2 \le a_F\cdot \epsilon.
$$
Choosing $\epsilon=\min\{1, a_F^{-1}\}\cdot \delta$, we get the result with $c=2 n^{n/(2n-4)}\cdot \max\left\{1, a_F^{2/(n-2)}\right\}$.
\end{proof}

%%%%%
We now present our results on Lang's error term conjecture  and ranks of elliptic curves.

\begin{theorem}\label{ThmMain1} Let $k$ be a number field and $E$ an elliptic curve  over $k$ with neutral point $x_0$.  If Conjecture \ref{ConjLang} holds for $X=E$ and $D=x_0$, then Conjecture \ref{ConjHonda}  holds for $E$.
\end{theorem}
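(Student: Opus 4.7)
The plan is to use Conjecture \ref{ConjLang} as an obstruction: given a finite extension $L/k$ with $r=[L:k]$ and $n=\rk E(L)$, I will manufacture an infinite family of algebraic points $Q_\delta\in E(L)\smallsetminus\{x_0\}$, parametrized by $\delta\to 0^+$, that are $v$-adically close to $x_0$ at a single archimedean place $v$ of $k$ while having tightly controlled canonical height. Substituting these into Lang's inequality will force $n\le c\cdot r$ for an explicit $c=c(E,k)$, which is exactly Honda's conjecture.

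To construct the family, fix an embedding $\sigma:L\hookrightarrow\C$ above $v$, giving $E(\C)\cong\C/\Lambda$ with $x_0\leftrightarrow 0$. Choose a basis $P_1,\dots,P_n$ of $E(L)/E(L)_{\mathrm{tors}}$ with elliptic logarithms $z_i\in\C$, and a $\Z$-basis $\omega_1,\omega_2$ of $\Lambda$. I apply Lemma \ref{LemmaApprox} to the $\R$-linear map $F:\R^{n+2}\to\C$, $F(b,m)=\sum_{i=1}^n b_iz_i-m_1\omega_1-m_2\omega_2$, whose kernel meets $\Z^{n+2}$ trivially because the $P_i$ are $\Z$-linearly independent modulo torsion and $\omega_1,\omega_2$ are $\R$-independent. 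For each small $\delta\in(0,1]$ the lemma yields a nonzero $(b,m)\in\Z^{n+2}$ with $\|b\|_2\le c\cdot\delta^{-2/n}$ and $0<|F(b,m)|\le\delta$; for $\delta$ sufficiently small necessarily $b\ne 0$, so $Q_\delta:=\sum b_iP_i\in E(L)\smallsetminus\{x_0\}$, and as $\delta\to 0$ the $Q_\delta$ sweep out infinitely many distinct points of unbounded canonical height.

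Standard estimates now give, with implicit constants as indicated: at the place $w\mid v$ induced by $\sigma$, the local height satisfies $\lambda_{x_0,w}(Q_\delta)\ge\log(1/\delta)+O(1)$, so after the $1/[k_{Q_\delta}:k]$-normalization of the proximity function (and the uniform lower bound on local heights at the remaining places over $v$) one has
$$m_{\{v\}}(x_0,Q_\delta)\ge\tfrac{1}{r}\log(1/\delta)+O_{E,v}(1);$$
the canonical height $\hat h_{\Acal}$ is a positive definite quadratic form in $b$, so $\hat h_{\Acal}(Q_\delta)\le\|M\|_{op}\|b\|_2^2$ and $\log^+h_{\Acal}(Q_\delta)\le\tfrac{4}{n}\log(1/\delta)+O_L(1)$; triviality of $\Kcal_E$ gives $h_{\Kcal_E}(Q_\delta)=O(1)$; and the tower formula $|\Disc(k_{Q_\delta})|^{[L:k_{Q_\delta}]}\le|\Disc(L)|$ yields $d(Q_\delta)\le[L:k]^{-1}\log|\Disc(L/\Q)|=O_L(1)$.

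Now apply Conjecture \ref{ConjLang} with $X=E$, $D=x_0$, $S=\{v\}$, $r=[L:k]$, and a fixed ample $\Acal$: the constant $C>0$ it provides depends only on $(E,k,\Acal)$, and its exceptional set for this degree bound is finite, hence it admits all but finitely many of the $Q_\delta$. Substituting the estimates above yields
$$\tfrac{1}{r}\log(1/\delta)\le O_L(1)+\tfrac{4C}{n}\log(1/\delta).$$
If $n>4Cr$ then $\tfrac{1}{r}-\tfrac{4C}{n}>0$, and letting $\delta\to 0^+$ makes the left-hand side unbounded while the right-hand side stays $O_L(1)$: contradiction. Hence $\rk E(L)\le 4C\cdot[L:k]$, which is Conjecture \ref{ConjHonda} for $E$ with $c(E,k)=4C$. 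The main obstacle is the delicate interplay between the factor $1/r$ arising from the $[k_{Q_\delta}:k]$-normalization of $m_S$ and the factor $4/n$ arising from Lemma \ref{LemmaApprox} applied in $n+2$ real variables: these must race in just the right way for the contradiction to trigger precisely in the Honda regime $n\gg r$. Ensuring that Lang's constant $C$ remains independent of $L$ while $d(Q_\delta)$ stays bounded across the family is what converts a Siegel-style approximation argument into a linear-in-degree rank bound.
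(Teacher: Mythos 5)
Your proof is correct and follows essentially the same strategy as the paper: apply Lemma \ref{LemmaApprox} via the complex uniformization at one Archimedean place to produce points of $E(L)$ that are very close to $x_0$ yet have canonical height controlled by the approximation quality, and then let Lang's error-term inequality collide with the lower bound for the proximity function to force $\rk E(L)\le 4C[L:k]+O(1)$. The only cosmetic difference is that you fold the period lattice into the linear form by working in $\R^{n+2}$ (exponent $4/n$), whereas the paper works with fixed lifts $\alpha_j$ in $\R^{\rho}$ (exponent $4/(\rho-2)$); either way the resulting rank bound is linear in $[L:k]$ and the argument is the same.
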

\begin{proof}
Assume that Conjecture \ref{ConjLang} holds for $X=E$ and $D=x_0$. Note that for a suitable ample sheaf $\Acal$ we can replace the height $h_\Acal$ in \eqref{EqLangConj} by the N\'eron-Tate canonical height $\hat{h}:E(k^{alg})\to \R$. Let $C>0$ be the number provided by Conjecture \ref{ConjLang} with these choices. 

Let $L/k$ be a finite extension, put $r=[L:k]$ and let $S$ be the set of Archimedean places of $k$. To obtain Conjecture \ref{ConjHonda} for $E$, it suffices to prove
\begin{equation}\label{EqClaim}
\rho:=\rk E(L)\le 4Cr +3.
\end{equation}

For the sake of contradiction, suppose $\rho>4Cr+3$. Let $x_1,...,x_\rho\in E(L)$ be $\Z$-linearly independent points and let $c_L$ be an upper bound for the quantity $[k':k]^{-1}\log \Disc(k')$ for all intermediate fields $k\subseteq k'\subseteq L$. In particular,   $d(x)\le c_L$ for all $x\in E(L)$.

Since $\Kcal_E\simeq \Ocal_E$ we can take $h_{\Kcal_E}$ as the zero function. Thus, Conjecture \ref{ConjLang} gives that all but finitely many $x\in E(L)$ satisfy
\begin{equation}\label{EqKey}
m_S(x_0,x) \le c_L + C\log^+ \hat{h}(x).
\end{equation}

Let $\sigma: L\to \C$ be an embedding and let $\tau=\sigma|_k$ be its restriction to $k$. We have an induced embedding $i_\sigma: E(L)\to E_\tau$ where $E_\tau:=(E\otimes_\tau \C)(\C)$. Fix a complex uniformization $\pi: \C\to \C/\Lambda\simeq E_\tau$ for a fixed choice of lattice $\Lambda$. The usual distance in $\C$ induces under $\pi$ a distance function $d_\tau$ on $E_\tau$, hence a distance $d_\sigma$ on $E(L)$ thanks to $i_\sigma$. Fix also $\alpha_1,\ldots,\alpha_\rho\in\C$ lifts of the points $x_j$ for $1\le j\le \rho$ in the sense that $\pi(\alpha_j)=i_\sigma(x_j)$, and observe that the  numbers $\alpha_1,\ldots, \alpha_\rho$ are $\Q$-linearly independent. 

Consider the $\R$-linear map $F:\R^\rho\to \C$ given by 
$$
F(t_1,\ldots,t_\rho)=\sum_{j=1}^\rho \alpha_jt_j.
$$
If in the proximity function $m_S(x_0,-)$ we only keep the contribution of the Archimedean place of $L$ corresponding to $\sigma$, then we see that there is a number $c_1>0$ depending only on the previous choices, such that for each non-zero tuple $b=(b_j)\in\Z^\rho$ the corresponding point $z_b=\sum_j b_jx_j\in E(L)\smallsetminus\{x_0\}$ satisfies 
\begin{equation}\label{EqLower}
m_S(x_0,z_b) +c_1\ge \frac{1}{r}\log^+\left(\frac{1}{d_\sigma(x_0,z_b)}\right)  \ge \frac{1}{r}\log^+\left(\frac{1}{|F(b)|}\right).
\end{equation}

Let $Y\ge 1$ be any real number (which we will later take large) and let $\delta=1/Y$. Note that $\ker(F)\cap \Z^\rho=\{0\}$ because the complex numbers $\alpha_j$ are $\Q$-linearly independent. As $\rho\ge 3$, we can apply Lemma \ref{LemmaApprox} to obtain a non-zero $b_Y \in \Z^\rho$ such that
\begin{equation}\label{EqL1}
|F(b_Y)|\le \delta= Y^{-1} \quad \mbox{ and }\quad \|b_Y \|_2 \le c_2 Y^{2/(\rho-2)}
\end{equation}
for certain $c_2>1$ independent of $Y$.

Let us take the point $z_Y:=z_{b_Y}\in E(L)$. Then \eqref{EqLower} and  \eqref{EqL1} give
$$
m_S(x_0,z_Y)+c_1\ge \frac{1}{r}\log (Y).
$$
On the other hand,  by \eqref{EqL1} and the theory of the N\'eron-Tate canonical height we have
$$
\hat{h}(z_Y)\le c_3 \|b_Y\|_2^2 \le c_4 Y^{4/(\rho-2)}
$$
for some numbers $c_3,c_4>1$ independent of $Y$. Hence, from \eqref{EqKey} we get
$$
\frac{1}{r}\log (Y) -c_1 \le m_S(x_0,z_Y)\le c_L+\frac{4C}{\rho-2}\log(Y) + \log c_4.
$$
The assumption $\rho>4Cr+3$ gives a contradiction for large $Y$, thus proving \eqref{EqClaim}.
\end{proof}
%%%

%%
After some minor notational modifications, the same argument gives:
\begin{theorem}\label{ThmMain2} Let $k$ be a number field and $E$ an elliptic curve  over $k$ with neutral point $x_0$.  If Conjecture \ref{ConjWeak} holds for $X=E$,  $D=x_0$, and a given   $r\ge 1$, then the following holds:

There is a positive integer $B_r$ depending only on the previous data (including $r$) such that for all finite extensions $L/k$ with $[L:k]\le r$ we have $\rk E(L)\le B_r$.
\end{theorem}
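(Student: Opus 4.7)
The plan is to follow the argument of Theorem \ref{ThmMain1} essentially verbatim, substituting the weaker conjectural input provided by Conjecture \ref{ConjWeak}. First I would fix the data promised by the hypothesis: apply Conjecture \ref{ConjWeak} with $X=E$, $D=x_0$, the given $r$, the set $S$ of archimedean places of $k$, and an ample sheaf $\Acal$ chosen so that (up to a bounded function) $h_\Acal$ equals the N\'eron--Tate canonical height $\hat h$. This produces a constant $C>0$ and a function $\xi:\Omega(k,r)\to\R$ that do not depend on any particular extension $L$; one then sets $B_r:=4Cr+3$.

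Next I would fix an arbitrary finite extension $L/k$ with $[L:k]=r'\le r$, assume toward contradiction that $\rho:=\rk E(L)>B_r$, and pick $\Z$-linearly independent points $x_1,\dots,x_\rho\in E(L)$. The only place where the proof of Theorem \ref{ThmMain1} genuinely uses something specific about $d(x)$ is the bound $d(x)\le c_L$ valid for all $x\in E(L)$. Its analogue here is a bound on $\xi(k_x)$: since $k_x$ lies in the finite lattice of intermediate fields of $L/k$, the quantity $\xi(k_x)$ takes only finitely many values as $x$ ranges over $E(L)$; let $M_L$ be their maximum. Using $\Kcal_E\simeq\Ocal_E$, Conjecture \ref{ConjWeak} then gives
\[
m_S(x_0,x)\le M_L+C\log^+\hat h(x)
\]
for all but finitely many $x\in E(L)$, replacing \eqref{EqKey}.

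From this point I would reproduce the argument of Theorem \ref{ThmMain1} line by line: fix a complex embedding $\sigma:L\hookrightarrow\C$, build the $\R$-linear map $F:\R^\rho\to\C$ from lifts of $i_\sigma(x_j)$ in a complex uniformization, and apply Lemma \ref{LemmaApprox} with $\delta=1/Y$ to produce a nonzero $b_Y\in\Z^\rho$, hence a point $z_Y\in E(L)$, such that the archimedean proximity contribution forces $m_S(x_0,z_Y)+c_1\ge(1/r')\log Y$, while $\hat h(z_Y)\le c_4 Y^{4/(\rho-2)}$ by the quadratic behavior of the canonical height. Substituting into the displayed inequality and comparing the coefficients of $\log Y$ as $Y\to\infty$ yields a contradiction precisely when $\rho>4Cr+3$.

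The only potential obstacle---and the whole point of stating Theorem \ref{ThmMain2}---is uniformity in $L$. The auxiliary constants $M_L,c_1,c_2,c_3,c_4$ depend on $L$ and on the chosen basis of $E(L)$, but each enters solely as a quantity bounded in $Y$, so they do not influence the comparison of $\log Y$-coefficients which drives the contradiction. The critical constant $C$ depends on $E,k,\Acal,r,S$ but not on $L$; hence $B_r=4Cr+3$ serves as the required uniform upper bound for $\rk E(L)$ over all extensions $L/k$ of degree at most $r$.
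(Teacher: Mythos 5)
Your proposal is correct and reproduces exactly what the paper intends by ``after some minor notational modifications, the same argument gives''. You correctly identify the two points requiring care: (i) that $\xi(k_x)$ is bounded uniformly over $x\in E(L)$ because $L/k$ has only finitely many intermediate fields, replacing the bound $d(x)\le c_L$, and (ii) that $C$ (and hence $B_r=4Cr+3$) is independent of the particular $L$ once $\Acal$, $r$, and $S$ are fixed, even though the auxiliary constants $c_1,\dots,c_4,M_L$ do depend on $L$ — which is harmless since they only shift the bounded part of the $\log Y$ comparison.
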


In particular, Conjecture \ref{ConjWeak} for $r=2$ implies boundedness of ranks for quadratic twists.

\begin{corollary}\label{CoroTwist} Let $k$ be a number field and $E$ an elliptic curve  over $k$ with neutral point $x_0$.  If Conjecture \ref{ConjWeak} holds for $X=E$,  $D=x_0$, and $r=2$, then Conjecture \ref{ConjTwist} holds for $E$.
\end{corollary}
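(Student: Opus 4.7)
The plan is to reduce directly to Theorem \ref{ThmMain2} by exploiting the well-known splitting of Mordell--Weil groups over quadratic extensions into parts coming from $E$ and its twist. Assume Conjecture \ref{ConjWeak} for $X=E$, $D=x_0$, and $r=2$. Theorem \ref{ThmMain2} then produces a positive integer $B_2=B_2(E,k)$ such that
$$
\rk E(L)\le B_2 \quad \text{for every }L\text{ with }k\subseteq L\subseteq k^{alg},\ [L:k]\le 2.
$$

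Now let $E'$ be any quadratic twist of $E$ over $k$. Either $E'\cong E$ over $k$ (in which case $\rk E'(k)=\rk E(k)\le B_2$ by taking $L=k$ above), or there is a non-square $d\in k^*$ with $E'\cong E$ over $L:=k(\sqrt{d})$, a quadratic extension. In the latter case, let $\sigma$ be the nontrivial element of $\mathrm{Gal}(L/k)$. The classical decomposition for quadratic twists gives an isogeny
$$
E(L)\longrightarrow E(k)\oplus E'(k),\qquad P\longmapsto (P+\sigma P,\ P-\sigma P),
$$
with kernel and cokernel killed by $2$. In particular, taking ranks,
$$
\rk E'(k)+\rk E(k)=\rk E(L)\le B_2,
$$
so $\rk E'(k)\le B_2$. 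Hence $R:=B_2$ witnesses Conjecture \ref{ConjTwist} for $E$.

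There is no real obstacle here: once Theorem \ref{ThmMain2} delivers the uniform rank bound $B_2$ over all quadratic extensions of $k$, the standard rank-splitting identity for quadratic twists packages the statement into the desired bound over $k$ itself. The only thing to be slightly careful about is the trivial twist and the factors of $2$ in the isogeny above, neither of which affect ranks.
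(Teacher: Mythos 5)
Your proof is correct and is exactly the argument the paper leaves implicit (the paper states the corollary as an immediate consequence of Theorem \ref{ThmMain2}, remarking only ``In particular, Conjecture \ref{ConjWeak} for $r=2$ implies boundedness of ranks for quadratic twists''). The one small imprecision is in the displayed map: $P-\sigma P$ lies in $E(L)$, not $E'(k)$, and one must compose with the $L$-isomorphism $E_L\cong E'_L$ to land in $E'(k)$; but the resulting rank identity $\rk E(L)=\rk E(k)+\rk E'(k)$ is exactly the standard fact you invoke, so the argument is sound.
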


If in Conjecture \ref{ConjLang} one moreover assumes that  $C$ is uniform for all elliptic curves over $k$, then one would deduce uniform boundedness for ranks of all elliptic curves over $k$.

%%%%%%%%%%%%%%%%%%%%%%%%%%%%%%%%%%%%%%
%%%%%%%%%%%%%%%%%%%%%%%%%%%%%%%%%%%%%%
%%%%%%%%%%%%%%%%%%%%%%%%%%%%%%%%%%%%%%
%%%%%%%%%%%%%%%%%%%%%%%%%%%%%%%%%%%%%%
%%%%%%%%%%%%%%%%%%%%%%%%%%%%%%%%%%%%%%
%%%%%%%%%%%%%%%%%%%%%%%%%%%%%%%%%%%%%%

\section{ Function fields } \label{SecFF}

Let us first consider the complex function field case. For elliptic curves with constant $j$ invariant (the isotrivial case), we can prove a strong form of the analogue of the part of Conjecture \ref{ConjLang} that is relevant for our discussion, with an error term which is \emph{bounded} instead of logarithmic. For the notation regarding Diophantine approximation for function fields, we refer to sections 16 and 28 of \cite{VojtaCIME}; in particular, note that the normalized genus term in the following result  is a geometric analogue of the logarithmic discriminant.
\begin{theorem}\label{ThmFF} Let $K$ be the function field of a smooth complex projective curve $B$ of genus $g_B$ and let $K^{alg}$ be an algebraic closure of $K$. Let $X$ be an elliptic curve over $K$ with constant $j$-invariant and let $x_0$ be the neutral point of $X$. Let $S$ be a finite set of places of $K$. For all $x\in X(K^{alg})\smallsetminus \{x_0\}$ we have that 
$$
m_S(x_0,x) \le \frac{\max\{0,2g_x-2\}}{[K_x:K]}  +\# S
$$
where $K_x$ is the residue field of $x$ and $g_x$ is the genus of the function field $K_x$.
\end{theorem}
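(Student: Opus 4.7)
My plan is to use constancy of $j(X)$ to convert the question, via base change and trivialization, into one about a morphism from an algebraic curve to the fiber elliptic curve $E_0/\C$, to which Riemann--Hurwitz applies. First I would reduce to the $L$-rational case: viewing $x\in X(K^{alg})\setminus\{x_0\}$ as a $K_x$-rational point of $X_{K_x}:=X\otimes_K K_x$ (still isotrivial over $K_x=\C(C_x)$), the normalization of $m_S$ in the function field setting (Section 28 of \cite{VojtaCIME}) yields $m_{S'}(x_0,x)_{X_{K_x}/K_x} = [K_x:K]\cdot m_S(x_0,x)_{X/K}$, where $S'$ is the set of places of $K_x$ above $S$, and $\#S'\le [K_x:K]\cdot\#S$. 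Hence it suffices to prove, for every isotrivial elliptic curve $Y$ over $L=\C(C)$ and every $y\in Y(L)\setminus\{y_0\}$, the estimate $m_S(y_0,y)\le \max\{0,2g_C-2\}+\#S$; dividing through by $[K_x:K]$ then gives the theorem.

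For this $L$-rational case, constancy of $j(Y)$ yields a finite Galois extension $L'/L$ of some degree $n$ dividing $|\mathrm{Aut}(E_0)|\le 6$ such that $Y\otimes_L L'\simeq E_0\otimes_\C L'$. By properness of $E_0$, the point $y$ extends to a $\C$-morphism $f:C'\to E_0$ (with $C'$ the smooth projective curve of function field $L'$), equivariant under $H=\mathrm{Gal}(L'/L)$. If $f$ is constant, then $f\not\equiv y_0$ and $m_S(y_0,y)=0$; otherwise $f$ is non-constant, $g_{C'}\ge 1$, and Riemann--Hurwitz applied to $f$ (using $g_{E_0}=1$) gives $\deg R_f=2g_{C'}-2$. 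A pullback identity relates the local proximity at $v\in C$ to the ramification of $f$ at preimages $\tilde P\in C'$: at places $v$ where $\pi:C'\to C$ is unramified, $n$ times the local proximity equals $\sum_{\tilde P\mapsto v,\,f(\tilde P)=y_0} e_f(\tilde P)$, while at ramified places of $\pi$---which coincide with the places of bad reduction of $Y$, carrying Kodaira fibers of type $I_0^*$, $II^*$, $III^*$, or $IV^*$---Galois-equivariance of $f$ forces $e_f(\tilde P)\not\equiv 0\pmod{e_\pi(\tilde P)}$ at any fixed $\tilde P$ with $f(\tilde P)=y_0$, placing the section $\sigma_y$ on a non-identity component of the singular fiber and yielding a zero local proximity there. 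Summing and applying Riemann--Hurwitz on the étale part of $\pi$ (where $(2g_{C'}-2)/n = 2g_C-2$) gives the claimed bound $m_S(y_0,y)\le \#S+(2g_C-2)$.

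The main obstacle I expect is justifying the vanishing of the local proximity at ramified places of $\pi$ and correctly accounting for the ramification of $f$ there. Concretely, one must verify via explicit component-group calculations on the Kodaira fibers $I_0^*, II^*, III^*, IV^*$ arising from non-trivially twisted isotrivial elliptic curves that the Galois-equivariance constraint $e_f\not\equiv 0\pmod{e_\pi}$ at stabilizer fixed points above $y_0$ forces $\sigma_y$ onto a non-identity component; and one must show that the ramification of $f$ occurring at these bad preimages absorbs precisely the $\deg R_\pi$ contribution appearing in Riemann--Hurwitz, so that only the étale part of $\pi$ effectively contributes to the genus term. Without this compensating vanishing, the naive Riemann--Hurwitz argument yields only the weaker bound $\#S+(2g_{C'}-2)/n$, which overshoots the sharp estimate by $\deg R_\pi/n$; establishing the compensation rigorously is the delicate technical heart of the proof.
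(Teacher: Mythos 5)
The paper's proof is considerably simpler than yours, and your route runs into a genuine difficulty that the paper avoids. After a finite base change, the paper assumes $X$ admits a constant integral model $B\times X_0$, lets $Y$ be the smooth projective curve with $\C(Y)=K_x$, views $x\in X(K_x)=\mathrm{Mor}(Y,X_0)$ as a (nonconstant) morphism $\phi:Y\to X_0$, and applies Riemann--Hurwitz directly to $\phi$: since $X_0$ has genus $1$,
$$
2g_Y-2=\deg R_\phi=\sum_{p\in Y}\bigl(e_p(\phi)-1\bigr)\ge\Bigl(\sum_{p\in\pi^{-1}(S)}e_p(\phi)\Bigr)-r\cdot\#S,
$$
and combining this with the identity $r\cdot m_S(x_0,x)=\sum_{p\in\pi^{-1}(S)\cap\phi^{-1}(x_0)}e_p(\phi)$ finishes the proof in two lines. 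There is no quotient $C'\to C$ to keep track of, hence no ramification of the trivialization cover to absorb, no Kodaira fiber analysis, and no component-group computation.

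By contrast, you keep the base curve $C$ fixed, pass to a Galois trivialization $C'\to C$, and then must reconcile Riemann--Hurwitz on $C'$ (which produces $2g_{C'}-2$) with the desired bound involving $2g_C-2$; the discrepancy is exactly $\deg R_\pi/n$, and you acknowledge you have no proof that it is cancelled. This is a real gap, not a routine verification: your claim that Galois-equivariance forces a \emph{zero} local proximity at every ramified place with $f(\tilde P)=y_0$ is not correct as stated. The constraint you derive (that $e_f(\tilde P)\not\equiv 0\pmod{e_\pi(\tilde P)}$) is right, but it does not imply vanishing of the local intersection with the identity section on the minimal regular model — for instance for $e_\pi(\tilde P)=2$ and $e_f(\tilde P)=3$ the section and the zero section both specialize to the same leg of the $I_0^*$ fiber and meet there. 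What actually has to be shown is that the excess ramification of $f$ above the bad places compensates the discrepancy, and that accounting is precisely what the paper circumvents by pushing the constancy assumption down to the base. You should either carry out the base change of $K$ (as the paper does, so that $Y$ itself maps to $X_0$ and $g_Y=g_x$) rather than of $K_x$ alone, or else complete the local computation at the additive Kodaira fibers; in the form written, your step 2 does not close.
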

\begin{proof} The idea is similar to \cite{SilvermanFF}. See also the proof of Theorem 28.1 in \cite{VojtaCIME}. 

After finite base change, one can reduce to the case when $X$ admits an integral model of the form $B\times X_0$ with $X_0$ an elliptic curve over $\C$, and that $x_0\in X_0(\C)$ is a constant section. The set $S$ can be identified with a finite set of points of $B$. 

There is a smooth projective curve $Y$ with a finite map $\pi:Y\to B$ of degree $r=[K_x:K]$ such that $K_x$ is the function field of $Y$ and the extension $K_x/K$ is induced by $\pi$ under pull-back. Note that $g_Y=g_x$. The point $x\in X(K_x)=\mathrm{Mor} (Y,X_0)$ then corresponds to a morphism $\phi: Y\to X_0$ which we may assume to be non-constant, for otherwise the result is clear ---it is only here that $\max\{0,2g_x-2\}$ is needed in the claimed estimate; the rest of the argument works with this quantity replaced by $2g_x-2$. 

Let $R_\phi$ be the ramification divisor of $\phi$ on $Y$ and for each $p\in Y$ we let $e_p(\phi)$ be the local ramification index. As $X_0$ has genus $1$, the Riemann-Hurwitz formula  gives
$$
2g_Y -2 = \deg R_\phi = \sum_{p\in Y} (e_p(\phi)-1) \ge \left(\sum_{p\in \pi^{-1}(S)}e_p(\phi)\right) - r\cdot \#S.
$$
The result follows since 
$$
m_S(x_0,x) = \frac{1}{r} \sum_{p\in \pi^{-1}(S)\cap \pi^{-1}(x_0)}e_p(\phi).
$$
\end{proof}

 It seems  that the question of boundedness of ranks for non-constant quadratic twists of elliptic curves over $\Q(t)$ with constant $j$-invariant is an open problem, see  \cite{RuSi2}. Along these lines, one can ask: \emph{Let $E$ be an elliptic curve over $\C$. Is there an integer $n_0=n_0(E)\ge 1$ such that for each hyperelliptic curve $X$ over $\C$ we have that $E^{n_0}$ is not an isogenous factor of the Jacobian of $X$?} Relevant examples are constructed in \cite{Paulhus}. Unfortunately, despite Theorem \ref{ThmFF}, our arguments regarding ranks do not seem to apply in this setting as we heavily use an Archimedean place in the number field case.

If instead $K$ is a global function field over a finite field, then  the analogue of Roth's theorem is false \cite{Mahler}, thus, the obvious analogue of Conjecture \ref{ConjLang} fails. This failure is due to constructions involving the Frobenius map. In addition, there is the problem that $K$ does not have Archimedean places. It is appropriate to recall at this point that the ranks of elliptic curves over such a field $K$ are unbounded, even for quadratic twist families in the isotrivial case \cite{ShTa, Ulmer}.

Let us formulate a final conclusion. From the point of view discussed here, \emph{it is conceivable that (un)boundedness of ranks of elliptic curves over number fields and global function fields of positive characteristic are non-analogous phenomena}. In fact, our work suggests that boundedness of ranks is closely related to delicate aspects of Diophantine approximation (namely, error terms),  while it has long been known that height inequalities in Diophantine approximation such as Roth's theorem behave differently in both settings.

%%%%%%%%%%%%%%%%%%%%%%%%%%%%%%%%%%%%%%
%%%%%%%%%%%%%%%%%%%%%%%%%%%%%%%%%%%%%%
%%%%%%%%%%%%%%%%%%%%%%%%%%%%%%%%%%%%%%
%%%%%%%%%%%%%%%%%%%%%%%%%%%%%%%%%%%%%%
%%%%%%%%%%%%%%%%%%%%%%%%%%%%%%%%%%%%%%
%%%%%%%%%%%%%%%%%%%%%%%%%%%%%%%%%%%%%%

\section{Acknowledgments}

Part of these results were obtained during my stay at the Institute for Advanced Study during  the academic year 2015-2016. I thank Zeev Rudnick for valuable feedback on this subject during that time. I also thank Machiel van Frankenhuijsen for providing me a copy of Lang's manuscript \cite{LangDraft}. Comments of Fabien Pazuki, Bjorn Poonen, and Joseph Silverman on preliminary versions of this manuscript are also gratefully acknowledged.

%%%%%%%%%%%%%%%%%%%%%%%%%%%%%%%%%%%%%%
%%%%%%%%%%%%%%%%%%%%%%%%%%%%%%%%%%%%%%
%%%%%%%%%%%%%%%%%%%%%%%%%%%%%%%%%%%%%%

\end{document}